\theoremstyle{plain}
  \newtheorem{thm}{Theorem}[section]
  \newtheorem{prop}[thm]{Proposition}
  \newtheorem{cor}[thm]{Corollary}
\theoremstyle{definition}
  \newtheorem{exmp}[thm]{Example}
  \newtheorem{rem}[thm]{Remark}
\DeclareMathOperator*{\colim}{colim}
\DeclareMathOperator{\dom}{dom}
\DeclareMathOperator{\cod}{cod}
\DeclareMathOperator{\ob}{ob}
\renewcommand{\phi}{\varphi}
\newcommand{\da}{\downarrow}
\newcommand{\lda}{\swarrow}
\newcommand{\rda}{\searrow}
\newcommand{\Lra}{\Longrightarrow}
\newcommand{\oto}{{\to\hspace*{-3.1ex}{\circ}\hspace*{1.9ex}}}
\newcommand{\bv}{\bigvee}
\newcommand{\bw}{\bigwedge}
\newcommand{\dv}{\dashv}
\newcommand{\nat}{\natural}
\newcommand{\al}{\alpha}
\newcommand{\be}{\beta}
\newcommand{\ep}{\epsilon}
\newcommand{\de}{\delta}
\newcommand{\De}{\Delta}
\newcommand{\ga}{\gamma}
\newcommand{\Ga}{\Gamma}
\newcommand{\ka}{\kappa}
\newcommand{\lam}{\lambda}
\newcommand{\CA}{\mathcal{A}}
\newcommand{\CC}{\mathcal{C}}
\newcommand{\CJ}{\mathcal{J}}
\newcommand{\CQ}{\mathcal{Q}}
\newcommand{\CX}{\mathcal{X}}
\newcommand{\sD}{{\sf D}}
\newcommand{\sP}{{\sf P}}
\newcommand{\sy}{{\sf y}}
\newcommand{\Cat}{{\bf Cat}}
\newcommand{\CAT}{{\bf CAT}}
\newcommand{\Chu}{{\bf Chu}}
\newcommand{\Dis}{{\bf Dis}}
\newcommand{\Met}{{\bf Met}}
\newcommand{\ParMet}{{\bf ParMet}}
\newcommand{\ParOrd}{{\bf ParOrd}}
\newcommand{\Ord}{{\bf Ord}}
\newcommand{\Set}{{\bf Set}}
\newcommand{\Sup}{{\bf Sup}}
\newcommand{\QCat}{\CQ\text{-}\Cat}
\newcommand{\DQCat}{\DQ\text{-}\Cat}
\newcommand{\QDis}{\CQ\text{-}\Dis}
\newcommand{\QChu}{\CQ\text{-}\Chu}
\newcommand{\co}{{\rm co}}
\newcommand{\op}{{\rm op}}
\newcommand{\of}{\overline{f}}
\newcommand{\og}{\overline{g}}
\newcommand{\tphi}{\widetilde{\phi}}
\newcommand{\tchi}{\widetilde{\chi}}
\newcommand{\tpsi}{\widetilde{\psi}}
\newcommand{\ha}{\widehat{a}}
\newcommand{\hC}{\widehat{C}}
\newcommand{\hX}{\widehat{X}}
\newcommand{\PU}{\sP U}
\newcommand{\PX}{\sP X}
\newcommand{\PY}{\sP Y}
\newcommand{\DQ}{\sD\CQ}
\newcommand{\tkD}{\widetilde{\ka D}}
\numberwithin{equation}{section}
\begin{document}

\title{Limits and colimits of quantaloid-enriched categories and their distributors}

\author{Lili Shen}
\address{Lili Shen\\Department of Mathematics and Statistics, York University, Toronto, Ontario, Canada, M3J 1P3}
\email{shenlili@yorku.ca}

\author{Walter Tholen}
\address{Walter Tholen\\Department of Mathematics and Statistics, York University, Toronto, Ontario, Canada, M3J 1P3}
\email{tholen@mathstat.yorku.ca}

\keywords{Quantaloid, $\mathcal{Q}$-category, $\mathcal{Q}$-distributor, $\mathcal{Q}$-Chu transform, total category}
\subjclass[2010]{18D20, 18A30, 18A35}
\thanks{Partial financial assistance by the Natural Sciences and Engineering Research Council of Canada is gratefully acknowledged.}
\dedicatory{Dedicated to the memory of Reinhard B{\"o}rger}

\date{}

\begin{abstract}
It is shown that, for a small quantaloid $\CQ$, the category of small $\CQ$-categories and $\CQ$-functors is total and cototal, and so is the category of $\CQ$-distributors and $\CQ$-Chu transforms.
\end{abstract}

\maketitle


\section{Introduction}

The importance of (small) categories enriched in a (unital) quantale rather than in an arbitrary monoidal category was discovered by Lawvere \cite{Lawvere1973} who enabled us to look at individual mathematical objects, such as metric spaces, as small categories. Through the study of lax algebras \cite{Hofmann2014}, quantale-enriched categories have become the backbone of a larger array of objects that may be viewed as individual generalized categories. Prior to this development, Walters \cite{Walters1981} had extended Lawvere's viewpoint in a different manner, replacing the quantale at work by a \emph{quantaloid} (a term proposed later by Rosenthal \cite{Rosenthal1996}), thus by a bicategory with the particular property that its hom-objects are given by complete lattices such that composition from either side preserves suprema; quantales are thus simply one-object quantaloids.

Based on the theory of quantaloid-enriched categories developed by Stubbe \cite{Stubbe2005,Stubbe2006}, recent works \cite{Hohle2011,Pu2012,Stubbe2014,Tao2014} have considered in particular the case when the quantaloid in question arises from a given quantaloid by a ``diagonal construction'' whose roots go far beyond its use in this paper; see \cite{Grandis2000,Grandis2002}. Specifically for the one-object quantaloids (i.e., quantales) whose enriched categories give (pre)ordered sets and (generalized) metric spaces, the corresponding small quantaloids of diagonals lead to truly partial structures, in the sense that the full structure is available only on a subset of the ambient underlying set of objects. 

In the first instance then, this paper aims at exploring the categorical properties of the category $\QCat$ of small $\CQ$-enriched categories and their $\CQ$-functors for a small quantaloid $\CQ$. By showing that $\QCat$ is topological \cite{Adamek1990} over the comma category $\Set/\ob\CQ$ (Proposition \ref{QCat_topological}) one easily describes small limits and colimits in this category, and beyond. In fact, one concludes that categories of this type are total \cite{Street1978} and cototal, hence possess even those limits and colimits of large diagrams whose existence is not made impossible by the size of the small hom-sets of $\QCat$ (see \cite{Borger1990}).

Our greater interest, however, is in the category $\QChu$ whose objects are often called $\CQ$-Chu spaces, the prototypes of which go back to \cite{Barr1991,Pratt1995} and many others (see \cite{Barwise1997,Ganter2007}). Its objects are \emph{$\CQ$-distributors} of $\CQ$-categories (also called $\CQ$-(bi)modules or $\CQ$-profunctors), hence they are compatible $\CQ$-relations (or $\CQ$-matrices) that have been investigated intensively ever since B{\'e}nabou \cite{Benabou1973} introduced them (see \cite{Benabou2000,Borceux1994a}). While when taken as the morphisms of the category whose objects are $\CQ$-categories, they make for a in many ways poorly performing category (as already the case $\CQ={\bf 2}$ shows), when taken as objects of $\QChu$ with morphisms given by so-called \emph{$\CQ$-Chu transforms}, i.e., by pairs of $\CQ$-functors that behave like adjoint operators, we obtain a category that in terms of the existence of limits and colimits behaves as strongly as $\QCat$ itself. In analogy to the property shown in \cite{Giuli2007} in a different categorical context, we first prove that the domain functor $\QChu\to\QCat$ allows for initial liftings \cite{Adamek1990} of structured cones over small diagrams (Theorem \ref{dom_initial_lifting}), which then allows for an explicit description of all limits and colimits in $\QChu$ over small diagrams. But although the domain functor fails to be topological, just as for $\QCat$ we are able to show totality (and, consequently, cototality) of $\QChu$. A key ingredient for this result is the existence proof of a generating set in $\QChu$, and therefore also of a cogenerating set (Theorem \ref{QChu_generator}).

\section{Limits and colimits in $\QCat$}

Throughout, let $\CQ$ be a small \emph{quantaloid}, i.e., a small category enriched in the category $\Sup$ of complete lattices and sup-preserving maps. A small \emph{$\CQ$-category} is given by a set $X$ (its set of objects) and a lax functor
$$a:X\to\CQ,$$
where the set $X$ is regarded as a quantaloid carrying the chaotic structure, so that for all $x,y\in X$ there is precisely one arrow $x\to y$, called $(x,y)$. Explicitly then, the $\CQ$-category structure on $X$ is given by
\begin{itemize}
\item a family of objects $|x|_X:=ax$ in $\CQ$ $(x\in X)$,
\item a family of morphisms $a(x,y):|x|\to|y|$ in $\CQ$ $(x,y\in X)$, subject to
$$1_{|x|}\leq a(x,x)\quad\text{and}\quad a(y,z)\circ a(x,y)\leq a(x,z)$$
\end{itemize}
$(x,y,z\in X)$. When one calls $|x|=|x|_X$ the \emph{extent} (or \emph{type}) of $x\in X$, a \emph{$\CQ$-functor} $f:(X,a)\to(Y,b)$ of $\CQ$-categories $(X,a)$, $(Y,b)$ is an extent-preserving map $f:X\to Y$ such that there is a lax natural transformation $a\to bf$ given by identity morphisms in $\CQ$; explicitly,
$$|x|_X=|f(x)|_Y\quad\text{and}\quad a(x,y)\leq b(f(x),f(y))$$
for all $x,y\in X$. Denoting the resulting ordinary category by $\QCat$, we have a forgetful functor
$$\bfig
\morphism<1200,0>[\QCat`\Set/\ob\CQ;\ob]
\Vtriangle(-300,-500)<300,300>[X`Y`\CQ;f`a`b]
\Vtriangle(900,-500)<300,300>[X`Y`\ob\CQ;f`|\text{-}|`|\text{-}|]
\place(600,-350)[\mapsto] \place(0,-320)[\leq]
\efig$$

\begin{exmp} \phantomsection \label{QCat_exmp}
\begin{itemize}
\item[\rm (1)] If $\CQ$ is a \emph{quantale}, i.e., a one-object quantaloid, then the extent functions of $\CQ$-categories are trivial and $\QCat$ assumes its classical meaning (as in \cite{Kelly1982}, where $\CQ$ is considered as a monoidal (closed) category). Prominent examples are $\CQ={\bf 2}=\{0<1\}$ and $\CQ=([0,\infty],\geq,+)$, where then $\QCat$ is the category $\Ord$ (sets carrying a reflexive and transitive relation, called \emph{order} here but commonly known as preorder, with monotone maps) and, respectively, the category $\Met$ (sets $X$ carrying a distance function $a:X\times X\to[0,\infty]$ required to satisfy only $a(x,x)=0$ and $a(x,z)\leq a(x,y)+a(y,z)$ but, in accordance with the terminology introduced by Lawvere \cite{Lawvere1973} and used in \cite{Hofmann2014}, nevertheless called \emph{metric} here, with non-expanding maps $f:X\to Y$, so that $b(f(x),f(y))\leq a(x,y)$ for all $x,y\in X$).
\item[\rm (2)] (Stubbe \cite{Stubbe2014}) Every quantaloid $\CQ$ gives rise to a new quantaloid $\DQ$ whose objects are the morphisms of $\CQ$, and for morphisms $u,v$ in $\CQ$, a morphism $(u,d,v):u\to/~>/v$ in $\DQ$, normally written just as $d$, is a $\CQ$-morphism $d:\dom u\to\cod v$ satisfying
    $$(d\lda u)\circ u=d=v\circ(v\rda d),$$
    also called a \emph{diagonal} from $u$ to $v$:
    $$\bfig
    \square[\bullet`\bullet`\bullet`\bullet;v\rda d`u`v`d\lda u]
    \morphism(0,500)<500,-500>[\bullet`\bullet;d]
    \efig$$
    (Here $d\lda u$, $v\rda d$ denote the \emph{internal homs} of $\CQ$, determined by
    $$z\leq d\lda u\iff z\circ u\leq d,\quad t\leq v\rda d\iff v\circ t\leq d$$
    for all $z:\cod u\to\cod d$, $t:\dom d\to\dom v$.) With the composition of $d:u\to/~>/v$ with $e:v\to/~>/w$ in $\DQ$ defined by
    $$e\diamond d=(e\lda v)\circ d=e\circ(v\rda d),$$
    and with identity morphisms $u:u\to/~>/u$, $\DQ$ becomes a quantaloid whose local order is inherited from $\CQ$. In fact, there is a full embedding
    $$\CQ\to\DQ,\quad(u:t\to s)\mapsto(u:1_t\to/~>/1_s)$$
    of quantaloids.

    We remark that the construction of $\sD$ works for ordinary categories; indeed it is part of the proper factorization monad on $\CAT$ \cite{Grandis2002}.
\item[\rm (2a)] For $\CQ={\bf 2}$, the quantaloid $\DQ$ has object set $\{0,1\}$. There are exactly two $\DQ$-arrows $1\to/~>/1$, given by $0,1$, and $0$ is the only arrow in every other hom-set of $\DQ$; composition is given by infimum. A $\DQ$-category is given by a set $X$, a distinguished subset $A\subseteq X$ (those elements of $X$ with extent $1$) and a (pre)order on $A$. Hence, a $\DQ$-category structure on $X$ is a (truly!) \emph{partial order} on $X$. With morphisms $f:(X,A)\to(Y,B)$ given by maps $f:X\to Y$ monotone on $A=f^{-1}B$ we obtain the category
    $$\ParOrd=\DQCat,$$
    which contains $\Ord$ as a full coreflective subcategory.
\item[\rm (2b)] For $\CQ=([0,\infty],\geq,+)$, the hom-sets of $\DQ$ are easily described by
    $$\DQ(u,v)=\{s\in[0,\infty]\mid u,v\leq s\},$$
    with composition given by $t\diamond s=s-v+t$ (for $t:v\to/~>/w$). A $\DQ$-category structure on a set $X$ consists of functions $|\text{-}|:X\to[0,\infty]$, $a:X\times X\to[0,\infty]$ satisfying
    $$|x|,|y|\leq a(x,y),\quad a(x,x)\leq|x|,\quad a(x,z)\leq a(x,y)-|y|+a(y,z)$$
    $(x,y,z\in X)$. Obviously, since necessarily $|x|=a(x,x)$, these conditions simplify to
    $$a(x,x)\leq a(x,y),\quad a(x,z)\leq a(x,y)-a(y,y)+a(y,z)$$
    $(x,y,z\in X)$, describing $a$ as a \emph{partial metric} on $X$ (see \cite{Hohle2011,Matthews1994,Pu2012}\footnote{Here our terminology naturally extends Lawvere's notion of metric and is synonymous with ``generalized partial metric'' as used by Pu-Zhang \cite{Pu2012} who dropped finiteness ($a(x,y)<\infty$), symmetry ($a(x,y)=a(y,x)$) and separation ($a(x,x)=a(x,y)=a(y,y)\iff x=y$) from the requirement for the notion of ``partial metric'' as originally introduced by Matthews \cite{Matthews1994}.}). With non-expanding maps $f:(X,a)\to(Y,b)$ satisfying $a(x,x)=b(f(x),f(x))$ for all $x\in X$ one obtains the category
    $$\ParMet=\DQCat,$$
    which contains $\Met$ as a full coreflective subcategory: the coreflector restricts the partial metric $a$ on $X$ to those elements $x\in X$ with $a(x,x)=0$.
\end{itemize}
\end{exmp}

To see how limits and colimits in the (ordinary) category $\QCat$ are to be formed, it is best to first prove its topologicity over $\Set/\ob\CQ$. Recall that, for any functor $U:\CA\to\CX$, a \emph{$U$-structured cone} over a diagram $D:\CJ\to\CA$ is given by an object $X\in\CX$ and a natural transformation $\xi:\De X\to UD$. A \emph{lifting} of $(X,\xi)$ is given by an object $A$ in $\CA$ and a cone $\al:\De A\to D$ over $D$ with $UA=X$, $U\al=\xi$. Such lifting $(A,\al)$ is \emph{$U$-initial} if, for all cones $\be:\De B\to D$ over $D$ and morphisms $t:UB\to UA$ in $\CX$, there is exactly one morphism $h:B\to A$ in $\CA$ with $Uh=t$ and $\al\cdot\De h=\be$. We call $U$ \emph{small-topological} \cite{Giuli2007} if all $U$-structured cones over small diagrams admit $U$-initial liftings, and $U$ is \emph{topological} when this condition holds without the size restriction on diagrams. Recall also the following well-known facts:
\begin{itemize}
\item Topological functors are necessarily faithful \cite{Borger1978}, and for faithful functors it suffices to consider discrete  cones to guarantee topologicity.
\item $U:\CA\to\CX$ is topological if, and only if, $U^{\op}:\CA^{\op}\to\CX^{\op}$ is topological.
\item The two properties above generally fail to hold for small-topological functors. However, for any functor $U$, a $U$-initial lifting of a $U$-structured cone that is a limit cone in $\CX$ gives also a limit cone in $\CA$.
\item Every small-topological functor is a fibration (consider singleton diagrams) and has a fully faithful right adjoint (consider the empty diagram).
\end{itemize}

\begin{prop} \label{QCat_topological}
For every (small) quantaloid $\CQ$, the ``object functor'' $\QCat\to\Set/\ob\CQ$ is topological.
\end{prop}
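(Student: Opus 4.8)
The plan is to realize the object functor $U\colon\QCat\to\Set/\ob\CQ$ as a topological functor by exhibiting explicit $U$-initial liftings. I would begin by noting that $U$ is faithful, since a $\CQ$-functor is completely determined by its underlying map of object sets; by the first of the recalled facts above it therefore suffices to lift \emph{discrete} structured cones. Such a cone amounts to an object $|\text{-}|\colon X\to\ob\CQ$ of $\Set/\ob\CQ$ together with a family of morphisms $\xi_j\colon(X,|\text{-}|)\to U(X_j,a_j)$ $(j\in\CJ)$ in $\Set/\ob\CQ$, that is, extent-preserving maps $\xi_j\colon X\to X_j$ into the underlying object sets of $\CQ$-categories $(X_j,a_j)$, so that $|x|=|\xi_j x|_{X_j}$ for all $x\in X$ and all $j$. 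Because each hom-object $\CQ(|x|,|y|)$ is a complete lattice and every $a_j(\xi_j x,\xi_j y)$ lies in it, I would define the lifted structure on $X$ by
$$a(x,y):=\bigwedge_{j\in\CJ}a_j(\xi_j x,\xi_j y),$$
the meet being formed in $\CQ(|x|,|y|)$. Since the elements involved form a \emph{subset} of this complete lattice, the meet exists regardless of the size of $\CJ$; this is precisely what will yield topologicity rather than mere small-topologicity.

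Next I would check that $(X,a)$ is a $\CQ$-category. The unit inequality is immediate, since $1_{|x|}=1_{|\xi_j x|}\leq a_j(\xi_j x,\xi_j x)$ for every $j$ forces $1_{|x|}\leq a(x,x)$. The delicate point, and the step I expect to be the main obstacle, is transitivity: composition in a quantaloid preserves suprema but in general \emph{not} infima, so one cannot simply distribute $\circ$ over the meet. I would circumvent this by arguing termwise before taking the meet. For each fixed $j$, monotonicity of composition together with $a(x,y)\leq a_j(\xi_j x,\xi_j y)$ and $a(y,z)\leq a_j(\xi_j y,\xi_j z)$ gives
$$a(y,z)\circ a(x,y)\leq a_j(\xi_j y,\xi_j z)\circ a_j(\xi_j x,\xi_j y)\leq a_j(\xi_j x,\xi_j z),$$
and forming the meet over $j\in\CJ$ on the right yields $a(y,z)\circ a(x,y)\leq a(x,z)$. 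By the very definition of $a$, each $\xi_j\colon(X,a)\to(X_j,a_j)$ is then a $\CQ$-functor.

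It remains to verify $U$-initiality. Since $U$ is faithful, the comparison morphism is unique as soon as it exists, so only its $\CQ$-functoriality needs to be checked. Given a $\CQ$-category $(B,b)$, a cone $\beta_j\colon(B,b)\to(X_j,a_j)$ and an extent-preserving map $t\colon B\to X$ with $\xi_j\circ t=\beta_j$ for all $j$, the functoriality of each $\beta_j$ gives $b(u,v)\leq a_j(\xi_j(tu),\xi_j(tv))$ for all $u,v\in B$; taking the meet over $j$ produces $b(u,v)\leq a(tu,tv)$, so that $t\colon(B,b)\to(X,a)$ is a $\CQ$-functor. Hence $(X,a)$ together with $(\xi_j)_{j\in\CJ}$ is the sought $U$-initial lifting, and $U$ is topological.
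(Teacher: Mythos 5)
Your proposal is correct and follows essentially the same route as the paper: the $U$-initial structure is defined by the pointwise meet $a(x,y)=\bigwedge_j a_j(\xi_j x,\xi_j y)$ in the complete lattice $\CQ(|x|,|y|)$, with the paper leaving the routine verifications (which you carry out explicitly, including the correct termwise handling of transitivity) to the reader.
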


\begin{proof}
Given a (possibly large) family $f_i:(X,|\text{-}|)\to(Y_i,|\text{-}|_i)$ $(i\in I)$ of maps over $\ob\CQ$, where every $Y_i$ carries a $\CQ$-category structure $b_i$ with extent function $|\text{-}|_i$, we must find a $\CQ$-category structure $a$ on $X$ with extent function $|\text{-}|$ such that (1) every $f_i:(X,a)\to(Y,b)$ is a $\CQ$-functor, and (2) for every $\CQ$-category $(Z,c)$, any extent preserving map $g:Z\to X$ becomes a $\CQ$-functor $(Z,c)\to(X,a)$ whenever all maps $f_i g$ are $\CQ$-functors $(Z,c)\to(Y_i,b_i)$ $(i\in I)$. But this is easy: simply define
$$a(x,y):=\bw_{i\in I}b_i(f_i(x),f_i(y))$$
for all $x,y\in X$. Hence, $a$ is the $\ob$-initial structure on $X$ with respect to the structured source $(f_i:X\to(Y_i,b_i))_{i\in I}$.
\end{proof}

\begin{cor} \label{QCat_complete}
$\QCat$ is complete and cocomplete, and the object functor has both a fully faithful left adjoint and a fully faithful right adjoint.
\end{cor}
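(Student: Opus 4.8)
The plan is to deduce the entire statement from the topologicity of the object functor $U:\QCat\to\Set/\ob\CQ$ established in Proposition \ref{QCat_topological}, invoking the standard properties of topological functors recalled just before it. As a preliminary, I would observe that the base category is itself complete and cocomplete: being the comma category of $\Set$ over the fixed set $\ob\CQ$, it is isomorphic to the power $\Set^{\ob\CQ}$, and hence inherits all small limits and colimits from $\Set$, formed fibrewise.

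For completeness of $\QCat$, I would take an arbitrary small diagram $D:\CJ\to\QCat$, form the limit cone of $UD$ in $\Set/\ob\CQ$, and view it as a $U$-structured cone over $D$. By topologicity this cone admits a $U$-initial lifting, and by the recalled fact that an initial lifting of a structured cone which is a limit cone downstairs is again a limit cone upstairs, the lifting is a limit of $D$; thus $\QCat$ is complete. For cocompleteness I would use that $U$ is topological exactly when $U^{\op}:\QCat^{\op}\to(\Set/\ob\CQ)^{\op}$ is, so that $U^{\op}$ is a topological functor over a complete base, and the previous argument yields completeness of $\QCat^{\op}$, i.e. cocompleteness of $\QCat$. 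Concretely, both (co)limits are computed by first forming the corresponding (co)limit of the underlying families over $\ob\CQ$ and then equipping it with the initial, respectively final, $\CQ$-category structure; the explicit initial structure is precisely the infimum formula $\bw_{i}b_i(f_i(x),f_i(y))$ from the proof of Proposition \ref{QCat_topological}.

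Finally, the two adjoints arise from the liftings over the empty diagram. The $U$-initial lifting of the empty source on an object $(X,|\text{-}|)$ carries the structure $a(x,y)=\top$, the top element of the hom-lattice $\CQ(|x|,|y|)$ (an empty infimum), giving the indiscrete $\CQ$-category; this is the object part of a right adjoint to $U$, which is fully faithful since $U$ is faithful and the lifting is uniquely determined. Dually, the final lifting of the empty sink produces the discrete $\CQ$-category and a fully faithful left adjoint. I do not anticipate a genuine obstacle: the statement is a formal consequence of topologicity together with the (co)completeness of $\Set/\ob\CQ$. The only points requiring slight care are checking that the base category is (co)complete and keeping the direction of the two adjoints straight—discrete/final on the left, indiscrete/initial on the right.
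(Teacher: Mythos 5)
Your proposal is correct and follows exactly the route the paper intends: the corollary is stated as an immediate formal consequence of Proposition \ref{QCat_topological}, using that $\Set/\ob\CQ\cong\Set^{\ob\CQ}$ is complete and cocomplete, that topological functors (being self-dual) lift limits and colimits, and that the empty cone/sink yields the indiscrete right adjoint and discrete left adjoint. Your identification of the two adjoints agrees with Remark \ref{QCat_limit}(4), so there is nothing to add.
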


\begin{rem} \phantomsection \label{QCat_limit}
\begin{itemize}
\item[\rm (1)] The set of objects of the product $(X,a)$ of a small family of $\CQ$-categories $(X_i,a_i)$ $(i\in I)$ is given by the fibred product of $(X_i,|\text{-}|_i)$ $(i\in I)$, i.e.,
    $$X=\{((x_i)_{i\in I},q)\mid q\in\ob\CQ,\ \forall i\in I(x_i\in X_i,|x_i|=q)\},$$
    and (when writing $(x_i)_{i\in I}$ instead of $((x_i)_{i\in I},q)$ and putting $|(x_i)_{i\in I}|=q$) we have
    $$a((x_i)_{i\in I},(y_i)_{i\in I})=\bw_{i\in I}a_i(x_i,y_i):|(x_i)_{i\in I}|\to|(y_i)_{i\in I}|$$
    for its hom-arrows. In particular, $(\ob\CQ,\top)$ with
    $$\top(q,r)=\top:q\to r$$
    the top element in $\CQ(q,r)$ (for all $q,r\in\ob\CQ$), is the terminal object in $\QCat$.
\item[\rm (2)] The coproduct $(X,a)$ of $\CQ$-categories $(X_i,a_i)$ $(i\in I)$ is simply formed by the coproduct in $\Set$, with all structure to be obtained by restriction:
    $$X=\coprod_{i\in I}X_i,\quad |x|_X=|x|_{X_i}\ \text{if}\ x\in X_i,\quad a(x,y)=\begin{cases}
    a_i(x,y) & \text{if}\ x,y\in X_i,\\
    \bot:|x|\to|y| & \text{else}.
    \end{cases}$$
    In particular, $\varnothing$ with its unique $\CQ$-category structure is an initial object in $\QCat$.
\item[\rm (3)] The equalizer of $\CQ$-functors $f,g:(X,a)\to(Y,b)$ is formed as in $\Set$, by restriction of the structure of $(X,a)$. The object set of their coequalizer $(Z,c)$ in $\QCat$ is also formed as in $\Set$, so that $Z=Y/\sim$, with $\sim$ the least equivalence relation on $Y$ with $f(x)\sim g(x)$, $x\in X$. With $\pi:Y\to Z$ the projection, necessarily $|\pi(y)|_Z=|y|_Y$, and $c(\pi(y),\pi(y'))$ is the join of all
    $$b(y_n,y'_n)\circ b(y_{n-1},y'_{n-1})\circ\dots\circ b(y_2,y'_2)\circ b(y_1,y'_1),$$
    where $|y|=|y_1|,|y'_1|=|y_2|,\dots,|y'_{n-1}|=|y_n|,|y'_n|=|y'|$ $(y_i,y'_i\in Y, n\geq 1)$.
\item[\rm (4)] The fully faithful left adjoint of $\QCat\to\Set/\ob\CQ$ provides a set $(X,|\text{-}|)$ over $\ob\CQ$ with the discrete $\CQ$-structure, given by
    $$a(x,y)=\begin{cases}
    1_{|x|} & \text{if}\ x=y,\\
    \bot:|x|\to|y| & \text{else};
    \end{cases}$$
    while the fully faithful right adjoint always takes $\top:|x|\to|y|$ as the hom-arrow, i.e., it chooses the indiscrete $\CQ$-structure.
\end{itemize}
\end{rem}

\begin{exmp}
The product of partial metric spaces $(X_i,a_i)$ $(i\in I)$ provides its carrier set
$$X=\{((x_i)_{i\in I},s)\mid s\in[0,\infty],\ \forall i\in I(x_i\in X_i,|x_i|=s)\}$$
with the  ``sup metric'':
$$a((x_i)_{i\in I},(y_i)_{i\in I})=\sup_{i\in I}a_i(x_i,y_i).$$
$[0,\infty]$ is terminal in $\ParMet$ when provided with the chaotic metric that makes all distances $0$, and it is a generator when provided with the discrete metric $d$:
$$d(s,t)=\begin{cases}
0 & \text{if}\ s=t,\\
\infty & \text{else}.
\end{cases}$$
\end{exmp}

Beyond small limits and colimits, $\QCat$ actually has all large-indexed limits and colimits that one can reasonably expect to exist. More precisely, recall that an ordinary category $\CC$ with small hom-sets is (see \cite{Borger1990})
\begin{itemize}
\item \emph{hypercomplete} if a diagram $D:\CJ\to\CC$ has a limit in $\CC$ whenever the limit of $\CC(A,D-)$ exists in $\Set$ for all $A\in\ob\CC$; equivalently: whenever, for every $A\in\ob\CC$, the cones $\De A\to D$ in $\CC$ may be labeled by a set;
\item \emph{totally cocomplete} if a diagram $D:\CJ\to\CC$ has a colimit in $\CC$ whenever the colimit of $\CC(A,D-)$ exists in $\Set$ for all $A\in\ob\CC$; equivalently: whenever, for every $A\in\ob\CC$, the connected components of $(A\da D)$ may be labelled by a set.
\end{itemize}
The dual notions are \emph{hypercocomplete} and \emph{totally complete}. It is well known (see \cite{Borger1990}) that
\begin{itemize}
\item $\CC$ is totally cocomplete if, and only if, $\CC$ is \emph{total}, i.e., if the Yoneda embedding $\CC\to\Set^{\CC^{\op}}$ has a left adjoint;
\item total cocompleteness implies hypercompleteness but not vice versa (with Ad{\'a}mek's monadic category over graphs \cite{Adamek1977} providing a counterexample);
\item for a \emph{solid} (=\emph{semi-topological} \cite{Tholen1979}) functor $\CA\to\CX$, if $\CX$ is hypercomplete or totally cocomplete, $\CA$ has the corresponding property \cite{Tholen1980};
\item in particular, every topological functor, every monadic functor over $\Set$, and every full reflective embedding is solid.
\end{itemize}

It is also useful for us to recall \cite[Corollary 3.5]{Borger1990}:

\begin{prop} \label{total_condition}
A cocomplete and cowellpowered category with small hom-sets and a generating set of objects is total.
\end{prop}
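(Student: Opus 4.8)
The plan is to rely on the characterization recalled above that $\CC$ is total exactly when the Yoneda embedding $\sy\colon\CC\to\Set^{\CC^{\op}}$ has a left adjoint, equivalently when for every presheaf $P\colon\CC^{\op}\to\Set$ the colimit
$$LP:=\colim\bigl(\mathrm{El}\,P\xrightarrow{\ \Phi\ }\CC\bigr)$$
of the canonical projection $\Phi$ out of the category of elements $\mathrm{El}\,P$ exists in $\CC$; this colimit then supplies the value at $P$ of the left adjoint, the unit $P\to\sy(LP)$ being the evident comparison. Since we only assume $\CC$ cocomplete, small colimits are at our disposal, but $\mathrm{El}\,P$ is in general a \emph{large} category, so the task is to reduce this large colimit to small data. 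All three hypotheses are used for this reduction: the generating set produces a small approximation, cocompleteness lets us form quotients, and cowellpoweredness guarantees that the approximation stabilises.

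First I would compress $\Phi$ using the generating set $\CG$. Writing $\mathrm{El}_{\CG}P$ for the full subcategory of $\mathrm{El}\,P$ spanned by the elements $(G,x)$ with $G\in\CG$, the smallness of $\CG$ together with the small hom-sets of $\CC$ makes $\mathrm{El}_{\CG}P$ small, so that
$$C_0:=\colim\bigl(\mathrm{El}_{\CG}P\to\CC\bigr)$$
exists in $\CC$; concretely it is a quotient of $\coprod_{G\in\CG}\coprod_{x\in PG}G$. Restricting cocones identifies, for each $V\in\CC$, the morphisms $C_0\to V$ with the cocones $(\sigma_{G,y}\colon G\to V)$ under $\mathrm{El}_{\CG}P$. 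The decisive point is to analyse when such a restricted cocone extends to a cocone under all of $\mathrm{El}\,P$: for every $(A,x)\in\mathrm{El}\,P$ one needs a map $\sigma_{A,x}\colon A\to V$ whose composite with each generator-probe $g\colon G\to A$ satisfying $P(g)(x)=y$ equals the prescribed $\sigma_{G,y}$. Because $\CG$ generates, the functors $\CC(G,-)$ are jointly faithful, so such a $\sigma_{A,x}$ is \emph{unique} when it exists; thus $LP$ is forced to be a quotient of $C_0$, namely the universal one for which all of these factorisations through maps out of the various $A$ become available.

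The construction of that universal quotient is the heart of the matter, and is where I expect the main difficulty to lie. Passing to a quotient of the target can only \emph{create} the missing factorisations (a cone into a colimit is easier to realise), so one builds $LP$ by a transfinite chain of regular quotients of $C_0$, at each successor stage coequalising so as to force the cone $\sigma_{A,x}$ to exist for more elements $(A,x)$, and taking colimits at limit stages. Here cowellpoweredness is essential: the quotient objects of $C_0$ form a \emph{set}, so the chain must stabilise, at which stage every required $\sigma_{A,x}$ exists; one then checks that the resulting object, equipped with the induced cocone, is genuinely $\colim_{\mathrm{El}\,P}\Phi$. Verifying that the forcing step is well defined, that stabilisation indeed produces all cones simultaneously, and that the limit object carries the \emph{universal} cocone, is the delicate part; it is precisely the place where the gap between a mere generator and a dense one is bridged by cowellpoweredness.

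Conceptually, the same outcome can be reached through the solid-functor machinery recalled above: taking $\CA$ to be a small full subcategory on a generating set, the nerve functor $\CC\to\Set^{\CA^{\op}}$ is faithful and admits the left Kan extension of the inclusion as a left adjoint, and cowellpoweredness is what one uses to show that this faithful functor is \emph{solid}; since $\Set^{\CA^{\op}}$ is total, totality of $\CC$ then follows from the transfer of total cocompleteness along solid functors. I would present the explicit colimit construction as the main argument and record this solid-functor viewpoint as the structural reason the three hypotheses suffice.
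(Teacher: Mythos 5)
First, a point of comparison: the paper does not prove this proposition at all --- it is recalled verbatim from B{\"o}rger--Tholen \cite{Borger1990} (Corollary 3.5), whose own argument runs through solid functors: one shows that the restricted Yoneda functor $\CC\to\Set^{\CA^{\op}}$, with $\CA$ the full subcategory spanned by a generating set, is solid, and then transfers total cocompleteness along it. So the ``structural'' route you record at the end is the one actually taken by the cited source, while your main argument is a direct construction of the left adjoint to the Yoneda embedding. Your reduction is set up correctly: $LP$ must be $\colim(\mathrm{El}\,P\to\CC)$, the truncation $C_0=\colim(\mathrm{El}_{\CG}P\to\CC)$ exists by cocompleteness, and generation forces any extension of a cocone from $\mathrm{El}_{\CG}P$ to $\mathrm{El}\,P$ to be unique, so that $C_0\to LP$ must be an epimorphism.

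However, the step you yourself flag as the heart of the matter is, as written, defective in two places. (1) ``Coequalising so as to force $\sigma_{A,x}$ to exist'' is not an available operation: what must be forced is a factorization of $h\circ c_{A,x}$ through the canonical morphism $e_A\colon\coprod_{G\in\CG,\,g\colon G\to A}G\to A$, which is an epimorphism (this is exactly the generating property) but need not be regular, so there is no pair of arrows whose coequalizer would produce the factorization; the correct primitive is the pushout of $e_A$ along $c_{A,x}$, and since pushouts of epimorphisms are epimorphisms this yields an epi-quotient of $C_0$, which is what makes cowellpoweredness applicable. (2) The transfinite chain is both problematic --- at a successor stage you would have to handle a proper class of elements $(A,x)$, and no colimit over a proper class is available --- and unnecessary: by cowellpoweredness the quotients $C_0\to C_0+_{S_A}A$ constitute, up to isomorphism, a mere \emph{set}, and their cointersection $q\colon C_0\to\overline{C}$ (a small wide pushout) does the whole job in one step, since a map out of $C_0$ factors through a cointersection of epi-quotients if and only if it factors through each of them. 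Finally, your sketch omits the verification that the resulting maps $\sigma_{A,x}\colon A\to\overline{C}$ are compatible with \emph{all} morphisms of $\mathrm{El}\,P$ (not only with the generator probes); this is automatic from the joint faithfulness of the $\CC(G,-)$, by the same argument that gave you uniqueness. With these repairs one gets $\CC(\overline{C},V)\cong\mathrm{Cocone}(\mathrm{El}\,P,V)$ naturally in $V$, so $\overline{C}=LP$, and your direct argument becomes a complete and legitimate alternative to the solid-functor proof of \cite{Borger1990}.
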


Since $\QCat$ is topological over $\Set/\ob\CQ$ which, as a complete, cocomplete, wellpowered and cowellpowered category with a generating and a cogenerating set, is totally complete and totally cocomplete, we conclude:

\begin{thm} \label{QCat_total}
$\QCat$ is totally complete and totally cocomplete and, in particular, hypercocomplete and hypercomplete.
\end{thm}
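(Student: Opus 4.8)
The plan is to derive the whole statement from the topologicity of the object functor $U:=\ob\colon\QCat\to\Set/\ob\CQ$ (Proposition \ref{QCat_topological}) together with the transfer properties of solid functors recorded above. First I would confirm that the base category $\Set/\ob\CQ$ enjoys all four total (co)completeness properties. Since $\ob\CQ$ is a set, $\Set/\ob\CQ$ is (equivalent to) a power of $\Set$, hence a complete, cocomplete, wellpowered and cowellpowered category possessing both a generating and a cogenerating set; by Proposition \ref{total_condition} and its dual it is therefore totally cocomplete and totally complete, and \emph{a fortiori} hypercomplete and hypercocomplete.

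Next, for the total cocompleteness of $\QCat$ itself: the functor $U$ is topological, hence solid, and its codomain $\Set/\ob\CQ$ is totally cocomplete; the inheritance result for solid functors then yields that $\QCat$ is totally cocomplete, equivalently total. The same inheritance result, applied to the hypercompleteness of $\Set/\ob\CQ$, delivers the hypercompleteness of $\QCat$ (though this also follows from total cocompleteness).

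For total completeness I would exploit the self-duality of topologicity. Because $U$ is topological, so is $U^{\op}\colon\QCat^{\op}\to(\Set/\ob\CQ)^{\op}$, and hence $U^{\op}$ is solid. Now $(\Set/\ob\CQ)^{\op}$ is totally cocomplete precisely because $\Set/\ob\CQ$ is totally complete, as established above. Applying the inheritance result to $U^{\op}$ shows that $\QCat^{\op}$ is totally cocomplete, i.e. $\QCat$ is totally complete; the hypercocompleteness of $\QCat$ follows in the same manner.

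The only genuine subtlety, and the step I would be most careful about, is this passage to opposite categories. The transfer property for solid functors is asymmetric: it carries total cocompleteness and hypercompleteness downward but says nothing directly about their duals, since solidity is not a self-dual notion. Consequently total completeness cannot be read off from $U$ alone; it is the self-duality of the \emph{topological} functor $U$ that bridges the gap and lets the single inheritance lemma cover both sides. Everything else is a direct invocation of the facts assembled before the statement.
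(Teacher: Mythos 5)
Your proposal is correct and follows essentially the same route as the paper, which likewise deduces the theorem from the topologicity (hence solidity) of $\QCat\to\Set/\ob\CQ$, the total (co)completeness of $\Set/\ob\CQ$ via Proposition \ref{total_condition}, and the self-duality of topological functors to cover the dual properties. Your explicit flagging of the asymmetry of the solid-functor transfer result is exactly the point the paper relies on implicitly.
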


\begin{rem} \label{QCat_generator}
Of course, we may also apply Proposition \ref{total_condition} directly to obtain Theorem \ref{QCat_total} since the left adjoint of $\QCat\to\Set/\ob\CQ$ sends a generating set of $\Set/\ob\CQ$ to a generating set of $\QCat$, and the right adjoint has the dual property. Explicitly then, denoting for every $s\in\ob\CQ$ by $\{s\}$ the discrete $\CQ$-category whose only object has extent $s$, we obtain the generating set $\{\{s\}\mid s\in\ob\CQ\}$ for $\QCat$. Similarly, providing the disjoint unions $D_s=\{s\}+\ob\CQ$ $(s\in\ob\CQ)$ with the identical extent functions and the indiscrete $\CQ$-category structures, one obtains a cogenerating set in $\QCat$.
\end{rem}

\section{Limits and colimits in $\QChu$}

For $\CQ$-categories $X=(X,a)$, $Y=(Y,b)$, a \emph{$\CQ$-distributor} \cite{Benabou1973} $\phi:X\oto Y$ (also called \emph{$\CQ$-(bi)module} \cite{Lawvere1973}, \emph{$\CQ$-profunctor}) is a family of arrows $\phi(x,y):|x|\to|y|$ $(x\in X,y\in Y)$ in $\CQ$ such that
$$b(y,y')\circ\phi(x,y)\circ a(x',x)\leq\phi(x',y')$$
for all $x,x'\in X$, $y,y'\in Y$. Its composite with $\psi:Y\oto Z$ is given by
$$(\psi\circ\phi)(x,z)=\bv_{y\in Y}\psi(y,z)\circ\phi(x,y).$$
Since the structure $a$ of a $\CQ$-category $(X,a)$ is neutral with respect to this composition, we obtain the category
$$\QDis$$
of $\CQ$-categories and their $\CQ$-distributors which, with the local pointwise order
$$\phi\leq\phi'\iff\forall x,y:\ \phi(x,y)\leq\phi'(x,y),$$
is actually a quantaloid. Every $\CQ$-functor $f:X\to Y$ gives rise to the $\CQ$-distributors $f_{\nat}:X\oto Y$ and $f^{\nat}:Y\oto X$ with
$$f_{\nat}(x,y)=b(f(x),y)\quad\text{and}\quad f^{\nat}(y,x)=b(y,f(x))$$
$(x\in X,y\in Y)$. One has $f_{\nat}\dv f^{\nat}$ in the 2-category $\QDis$, and if one lets $\QCat$ inherit the order of $\QDis$ via
$$f\leq g\iff f^{\nat}\leq g^{\nat}\iff g_{\nat}\leq f_{\nat}\iff 1_{|x|}\leq b(f(x),g(x))\quad (x\in X),$$
then one obtains 2-functors
$$(-)_{\nat}:(\QCat)^{\co}\to\QDis,\quad (-)^{\nat}:(\QCat)^{\op}\to\QDis$$
which map objects identically; here ``$\op$'' refers to the dualization of 1-cells and ``$\co$'' to the dualization of 2-cells.

\begin{exmp}[See Example \ref{QCat_exmp}]
\begin{itemize}
\item[\rm (1)] A ${\bf 2}$-distributor is an \emph{order ideal relation}; that is, a relation $\phi:X\oto Y$ of ordered sets that behaves like a two-sided ideal w.r.t. the order:
$$x'\leq x\ \&\ x\phi y\ \&\ y\leq y'{}\Lra{}x'\phi y'.$$
A $[0,\infty]$-distributor $\phi:X\oto Y$ introduces a distance function between metric spaces $(X,a)$, $(Y,b)$ that must satisfy
$$\phi(x',y')\leq a(x',x)+\phi(x,y)+a(y,y')$$
for all $x,x'\in X$, $y,y'\in Y$.
\item[\rm (2)] A $\sD{\bf 2}$-distributor $\phi:X\oto Y$ is given by a ${\bf 2}$-distributor $A\oto B$ where $A=\{x\in X\mid x\leq x\}$, $B=\{y\in Y\mid y\leq y\}$ are the coreflections of $X$, $Y$, respectively. Likewise, a $\sD[0,\infty]$-distributor $\phi:X\oto Y$ is given by a distributor of the metric coreflections of the partial metric spaces $X$ and $Y$.
\end{itemize}
\end{exmp}

In our context $\QDis$ plays only an auxiliary role for us in setting up the category
$$\QChu$$
whose objects are $\CQ$-distributors and whose morphisms $(f,g):\phi\to\psi$ are given by $\CQ$-functors $f:(X,a)\to(Y,b)$, $g:(Z,c)\to(W,d)$ such that the diagram
\begin{equation} \label{Chu_transform_def_diagram}
\bfig
\square<700,500>[X`Y`W`Z;f_{\nat}`\phi`\psi`g^{\nat}]
\place(350,0)[\circ] \place(350,500)[\circ] \place(0,250)[\circ] \place(700,250)[\circ]
\efig
\end{equation}
commutes in $\QDis$:
\begin{equation} \label{Chu_transform_def}
\psi(f(x),z)=\phi(x,g(z))
\end{equation}
for all $x\in X$, $z\in Z$. In particular, with $\phi=a$, $\psi=b$ one obtains that the morphisms $(f,g):1_{(X,a)}\to 1_{(Y,b)}$ in $\QChu$ are precisely the adjunctions $f\dv g:(Y,b)\to(X,a)$ in the 2-category $\QCat$. With the order inherited from $\QCat$, $\QChu$ is in fact a 2-category, and one has 2-functors
\renewcommand\arraystretch{1.5}
$$\begin{array}{lll}
\dom: & \QChu\to\QCat, & (f,g)\mapsto f,\\
\cod: & \QChu\to(\QCat)^{\op}, & (f,g)\mapsto g.
\end{array}$$
In order for us to exhibit properties of $\QChu$, it is convenient to describe \emph{$\CQ$-Chu transforms}, i.e., morphisms in $\QChu$, alternatively, with the help of \emph{presheaves}, as follows. For every $s\in\ob\CQ$, let $\{s\}$ denote the discrete $\CQ$-category whose only object has extent $s$. For a $\CQ$-category $X=(X,a)$, a \emph{$\CQ$-presheaf $\phi$ on $X$} of extent $|\phi|=s$ is a $\CQ$-distributor $\phi:X\oto\{s\}$. Hence, $\phi$ is given by a family of $\CQ$-morphisms $\phi_x:|x|\to|\phi|$ $(x\in X)$ with $\phi_y\circ a(x,y)\leq\phi_x$ $(x,y\in X)$. With
$$[\phi,\psi]=\bw_{x\in X}\psi_x\lda\phi_x,$$
$\PX$ becomes a $\CQ$-category, and one has the \emph{Yoneda $\CQ$-functor}
$$\sy_X=\sy:X\to\PX,\quad x\mapsto(a(-,x):X\oto\{|x|\}).$$
$\sy$ is fully faithful, i.e., $[\sy(x),\sy(y)]=a(x,y)$ $(x,y\in X)$. The point of the formation of $\PX$ for us is as follows (see \cite{Heymans2010,Shen2015}):

\begin{prop} \label{cograph_Kan_adjunction}
The 2-functor $(-)^{\nat}:(\QCat)^{\op}\to\QDis$ has a left adjoint $\sP$ which maps a $\CQ$-distributor $\phi:X\oto Y$ to the $\CQ$-functor
$$\phi^*:\PY\to\PX,\quad\psi\mapsto\psi\circ\phi;$$
hence,
$$(\phi^*(\psi))_x=\bv_{y\in Y}\psi_y\circ\phi(x,y)$$
for all $\psi\in\PY$, $x\in X$. In particular, for a $\CQ$-functor $f:X\to Y$ one has
$$f^*:=(f_{\nat})^*:\PY\to\PX,\quad (f^*(\psi))_x=\psi_{f(x)}.$$
\end{prop}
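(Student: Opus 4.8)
The plan is to establish the adjunction by a natural isomorphism of hom-sets rather than by guessing a unit and counit. Since $(-)^{\nat}$ is the identity on objects, a left adjoint with object part $\sP X=\PX$ amounts to giving a bijection
$$\QCat(A,\PX)\cong\QDis(X,A),$$
natural in the $\CQ$-category $A=(A,c)$ and in $X=(X,a)$, and order-preserving on both sides so as to upgrade to a $2$-adjunction; here I use that a morphism $\sP X\to A$ in $(\QCat)^{\op}$ is exactly a $\CQ$-functor $A\to\PX$ and that $(-)^{\nat}A=A$. Once the natural iso is in place, the action of $\sP$ on morphisms is forced.

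The core step is to exhibit the correspondence and check it is well defined in both directions. To a $\CQ$-distributor $\psi:X\oto A$ I assign $\hpsi:A\to\PX$ with $\hpsi(p)=\psi(-,p)$, i.e.\ $(\hpsi(p))_x=\psi(x,p)$; conversely a $\CQ$-functor $g:A\to\PX$ yields $\psi(x,p)=(g(p))_x$. The single transition law
$$c(p,p')\circ\psi(x,p)\circ a(x',x)\leq\psi(x',p')$$
of a distributor packages exactly the two conditions needed on the other side: putting $p'=p$ and using $1\leq c(p,p)$ gives the presheaf inequality for $\psi(-,p)$, so that $\hpsi(p)\in\PX$ has extent $|p|$; putting $x'=x$ and using $1\leq a(x,x)$ gives $c(p,p')\circ\psi(x,p)\leq\psi(x,p')$ for all $x$, i.e.\ $c(p,p')\leq[\hpsi(p),\hpsi(p')]$, so $\hpsi$ is a $\CQ$-functor. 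The reverse passage simply recombines the presheaf condition on each $g(p)$ with the functoriality inequality of $g$ to recover the transition law. The two assignments are manifestly mutually inverse, and since $\psi\leq\psi'$ pointwise is by the defining property of $\lda$ equivalent to $1_{|p|}\leq[\hpsi(p),\widehat{\psi'}(p)]$ for all $p$, the bijection is an order isomorphism.

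It remains to verify naturality and to read off the action of $\sP$ on morphisms. Naturality in $A$ along a $\CQ$-functor $k:A'\to A$ comes down to the identity $(k^{\nat}\circ\psi)(x,p')=\psi(x,k(p'))$, where the join $\bv_{p}c(p,k(p'))\circ\psi(x,p)$ is bounded above by the transition law and attained at $p=k(p')$ via $1\leq c(k(p'),k(p'))$. Naturality in $X$ along a distributor $\phi:X'\oto X$ compares $\psi\circ\phi$ with $\phi^{*}\circ\hpsi$, and both send $p$ to the presheaf $x'\mapsto\bv_{x}\psi(x,p)\circ\phi(x',x)$ on $X'$; this very computation identifies the induced left-adjoint action as $\sP\phi=\phi^{*}$, the stated formula. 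Functoriality $\sP(\psi\circ\phi)=\phi^{*}\circ\psi^{*}$ and $\sP 1_X=\id_{\PX}$ then follow from associativity and neutrality of distributor composition. Finally, taking $\phi=f_{\nat}$ for a $\CQ$-functor $f:X\to Y=(Y,b)$ gives $(f^{*}(\psi))_x=\bv_{y}\psi_y\circ b(f(x),y)=\psi_{f(x)}$, the join being attained at $y=f(x)$ since $1\leq b(f(x),f(x))$ and bounded above by the presheaf inequality $\psi_y\circ b(f(x),y)\leq\psi_{f(x)}$.

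I expect the only real obstacle to be notational bookkeeping rather than a conceptual difficulty: keeping the $\op$ and $\co$ conventions straight so that the variances in the hom-isomorphism align, and recognising that the lone inequality defining a $\CQ$-distributor encodes precisely a presheaf condition together with a $\CQ$-functoriality condition. Once the correspondence is seen to be well defined both ways, every remaining claim---mutual inverseness, order-compatibility, and both naturalities---is a routine calculation powered solely by the reflexivity laws $1\leq c(p,p)$, $1\leq a(x,x)$ and the transition laws.
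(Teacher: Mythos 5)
Your proof is correct. The paper itself offers no proof of this proposition (it is delegated to the cited references), but your hom-set bijection $\QCat(A,\PX)\cong\QDis(X,A)$ is exactly the transpose correspondence $\phi\mapsto\tphi$ with $(\tphi(y))_x=\phi(x,y)$ that the paper introduces immediately after the statement, so your argument is the standard, intended one and all the verifications (well-definedness both ways, order-compatibility, both naturalities, and the computation of $f^*$) check out.
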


Denoting by $\tphi:Y\to\PX$ the transpose of $\phi:X\oto Y$ under the adjunction, determined by $\tphi^{\nat}\circ(\sy_X)_{\nat}=\phi$, so that $(\tphi(y))_x=\phi(x,y)$ for all $x\in X$, $y\in Y$, we can now present $\CQ$-Chu transforms, as follows:

\begin{cor} \label{Chu_transform_presheaf}
A morphism $(f,g):\phi\to\psi$ in $\QChu$ (as in {\rm(\ref{Chu_transform_def_diagram})}) may be equivalently presented as a commutative diagram
\begin{equation}
\bfig
\square/<-`<-`<-`<-/<700,500>[\PX`\PY`W`Z;f^*`\tphi`\tpsi`g]
\efig
\end{equation}
in $\QCat$. Condition {\rm(\ref{Chu_transform_def})} then reads as
\begin{equation}
(\tphi(g(z)))_x=(\tpsi(z))_{f(x)}
\end{equation}
for all $x\in X$, $z\in Z$.
\end{cor}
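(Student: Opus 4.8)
The plan is to read the equivalence off directly from the adjunction $\sP\dv(-)^{\nat}$ of Proposition~\ref{cograph_Kan_adjunction}. The square~(\ref{Chu_transform_def_diagram}) asserts the single equation $\psi\circ f_{\nat}=g^{\nat}\circ\phi$ between $\CQ$-distributors $X\oto Z$, whereas the diagram displayed in the statement asserts the equation $f^*\circ\tpsi=\tphi\circ g$ between $\CQ$-functors $Z\to\PX$. Since the transpose $\phi\mapsto\tphi$ furnished by the adjunction is a bijection $\QDis(X,Z)\cong\QCat(Z,\PX)$, natural in both variables, the whole argument reduces to checking that this bijection carries the first equation to the second.

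First I would transpose each side separately. Using that the structures $b$ and $d$ are neutral for composition in $\QDis$ (so that the join defining $\psi\circ f_{\nat}$, resp.\ $g^{\nat}\circ\phi$, is attained at $y=f(x)$, resp.\ $w=g(z)$), a one-line computation gives $(\psi\circ f_{\nat})(x,z)=\psi(f(x),z)$ and $(g^{\nat}\circ\phi)(x,z)=\phi(x,g(z))$. Feeding these into the explicit formulas of Proposition~\ref{cograph_Kan_adjunction}, namely $(\tphi(w))_x=\phi(x,w)$, $(\tpsi(z))_y=\psi(y,z)$ and $(f^*(\lambda))_x=\lambda_{f(x)}$, yields $(\widetilde{\psi\circ f_{\nat}}(z))_x=\psi(f(x),z)=(\tpsi(z))_{f(x)}=((f^*\circ\tpsi)(z))_x$ and likewise $(\widetilde{g^{\nat}\circ\phi}(z))_x=\phi(x,g(z))=((\tphi\circ g)(z))_x$. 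Thus transposition sends $\psi\circ f_{\nat}$ to $f^*\circ\tpsi$ and $g^{\nat}\circ\phi$ to $\tphi\circ g$; these are precisely the naturality identities for the transpose in its domain variable (postcomposition by $f_{\nat}$ corresponding to precomposition by $f^*$) and in its codomain variable (precomposition by $g^{\nat}$ corresponding to precomposition by $g$).

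By bijectivity of the transpose, $\psi\circ f_{\nat}=g^{\nat}\circ\phi$ holds if, and only if, $f^*\circ\tpsi=\tphi\circ g$, which is the commutativity of the square of $\CQ$-functors in the statement; evaluating this last equation at $z\in Z$ and in component $x\in X$ reproduces the final displayed identity $(\tphi(g(z)))_x=(\tpsi(z))_{f(x)}$, equivalently condition~(\ref{Chu_transform_def}). I do not expect a genuine obstacle here: the only point needing a word of care is that an equality of $\CQ$-functors is nothing but an equality of their underlying maps (the $\CQ$-functor conditions being a property rather than extra data, and a presheaf being determined by its components), so that the componentwise identities above do establish the equation $f^*\circ\tpsi=\tphi\circ g$ in $\QCat$. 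Well-typedness of the square — that extents match along each arrow — is automatic from $f$ and $g$ being extent-preserving.
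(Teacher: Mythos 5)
Your argument is correct and is essentially the paper's own: both proofs reduce the commutativity of the distributor square $\psi\circ f_{\nat}=g^{\nat}\circ\phi$ to that of the presheaf square via the transpose of Proposition~\ref{cograph_Kan_adjunction}, using neutrality of the category structures to evaluate the composites (the paper writes this as the single chain $f^*(\tpsi(z))=\tpsi(z)\circ f_{\nat}=\sy_Z(z)\circ\psi\circ f_{\nat}=\dots=\tphi(g(z))$, while you transpose each side separately and invoke bijectivity). Your explicit appeal to bijectivity of the transpose even makes the ``equivalently'' slightly more visible than the paper's one-directional chain.
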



\begin{proof}
For all $z\in Z$,
$$f^*(\tpsi(z))=\tpsi(z)\circ f_{\nat}=\sy_Z(z)\circ\psi\circ f_{\nat}=\sy_Z(z)\circ g^{\nat}\circ\phi=\sy_Y(g(z))\circ\phi=\tphi(g(z)).$$
\end{proof}

\begin{thm} \label{dom_initial_lifting}
Let $D:\CJ\to\QChu$ be a diagram such that the colimit $W=\colim\cod D$ exists in $\QCat$. Then any cone $\ga:\De X\to\dom D$ in $\QCat$ has a $\dom$-initial lifting $\Ga:\De\phi\to D$ in $\QChu$ with $\phi:X\oto W$, $\dom\Ga=\ga$. In particular, if $\ga$ is a limit cone in $\QCat$, $\Ga$ is a limit cone in $\QChu$.
\end{thm}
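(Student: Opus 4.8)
The plan is to reduce everything to the presheaf picture of Corollary \ref{Chu_transform_presheaf} and to let the universal property of the colimit $W=\colim\cod D$ do the work. Write $D(j)=(\phi_j:X_j\oto W_j)$ for the objects and $D(u)=(f_u,g_u):\phi_j\to\phi_k$ for $u:j\to k$ in $\CJ$, so that $f_u:X_j\to X_k$, $g_u:W_k\to W_j$, and let $\ga_j:X\to X_j$ (with $f_u\circ\ga_j=\ga_k$) be the legs of the given cone. By hypothesis $W=\colim\cod D$ comes with $\CQ$-functors $\pi_j:W_j\to W$ satisfying $\pi_j\circ g_u=\pi_k$ and universal among such cocones. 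The candidate lifting will be $\Ga_j:=(\ga_j,\pi_j):\phi\to\phi_j$, where the distributor $\phi:X\oto W$ is still to be produced; note that $\dom\Ga=\ga$ is then forced, and that the identities $\pi_j\circ g_u=\pi_k$ are exactly what make $\Ga$ a cone over $D$ in $\QChu$ once each $\Ga_j$ is known to be a $\CQ$-Chu transform.

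To construct $\phi$ I would work with transposes. By Proposition \ref{cograph_Kan_adjunction} a distributor $\phi:X\oto W$ is the same as a $\CQ$-functor $\tphi:W\to\PX$, and by Corollary \ref{Chu_transform_presheaf} the pair $(\ga_j,\pi_j)$ is a $\CQ$-Chu transform $\phi\to\phi_j$ precisely when $\ga_j^*\circ\widetilde{\phi_j}=\tphi\circ\pi_j$. The key observation is that the $\CQ$-functors $\ga_j^*\circ\widetilde{\phi_j}:W_j\to\PX$ already form a cocone on the same diagram: for $u:j\to k$, using contravariance of $(-)^*$, the relation $\ga_k=f_u\circ\ga_j$, and the presheaf square of $D(u)$ (namely $f_u^*\circ\widetilde{\phi_k}=\widetilde{\phi_j}\circ g_u$),
$$\ga_k^*\circ\widetilde{\phi_k}=\ga_j^*\circ f_u^*\circ\widetilde{\phi_k}=\ga_j^*\circ\widetilde{\phi_j}\circ g_u.$$
Hence the universal property of $W$ yields a unique $\CQ$-functor $\tphi:W\to\PX$ with $\tphi\circ\pi_j=\ga_j^*\circ\widetilde{\phi_j}$ for all $j$. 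Transposing back defines $\phi:X\oto W$, and the displayed factorizations say exactly that every $\Ga_j=(\ga_j,\pi_j)$ is a $\CQ$-Chu transform; so $\Ga:\De\phi\to D$ is a lifting of $\ga$.

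For $\dom$-initiality I would start from an arbitrary cone $\be:\De\chi\to D$ in $\QChu$, with legs $\be_j=(p_j,q_j):\chi\to\phi_j$ (so $p_j:\dom\chi\to X_j$, $q_j:W_j\to\cod\chi$), together with a $\CQ$-functor $t:\dom\chi\to X$ satisfying $\ga_j\circ t=p_j$ for all $j$. Naturality of $\be$ gives $q_j\circ g_u=q_k$, so the $q_j$ again form a cocone on the $W$-diagram, and the universal property produces a unique $\CQ$-functor $\hat t:W\to\cod\chi$ with $\hat t\circ\pi_j=q_j$. This forces the lift to be $h=(t,\hat t)$ and makes it unique, since $t$ is prescribed and $\hat t$ is determined by the colimit. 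It then remains only to check that $(t,\hat t)$ is genuinely a $\CQ$-Chu transform $\chi\to\phi$, i.e.\ (Corollary \ref{Chu_transform_presheaf}) that $t^*\circ\tphi=\tchi\circ\hat t$ as $\CQ$-functors $W\to\sP(\dom\chi)$.

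This last verification is the step I expect to be the only real obstacle, and the colimit settles it too: two $\CQ$-functors out of $W$ that agree after precomposition with every $\pi_j$ coincide, by uniqueness in the universal property. Precomposing with $\pi_j$ and invoking $\tphi\circ\pi_j=\ga_j^*\circ\widetilde{\phi_j}$, $\hat t\circ\pi_j=q_j$, contravariance of $(-)^*$, $\ga_j\circ t=p_j$, and the presheaf square $p_j^*\circ\widetilde{\phi_j}=\tchi\circ q_j$ expressing that $\be_j$ is a $\CQ$-Chu transform, both composites reduce to $p_j^*\circ\widetilde{\phi_j}=\tchi\circ q_j$ and hence agree. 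This establishes $\dom$-initiality of $(\phi,\Ga)$. Finally, the closing assertion is immediate from the general principle recalled before Proposition \ref{QCat_topological}: a $\dom$-initial lifting of a structured cone that is a limit cone in $\QCat$ is a limit cone in $\QChu$, so if $\ga$ is a limit cone then so is $\Ga$.
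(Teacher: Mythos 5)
Your proof is correct and follows essentially the same route as the paper's: transpose distributors to presheaf-valued $\CQ$-functors, obtain $\tphi$ by factoring the induced cocone $\ga_j^*\circ\widetilde{\phi_j}$ through the colimit $W=\colim\cod D$, and establish initiality by factoring the codomain cocone through $W$ and testing the remaining Chu condition against the jointly epimorphic colimit injections. The paper merely packages your componentwise computations as whiskerings of the natural transformation $\ka:(\dom(-))_{\nat}\oto(\cod(-))^{\nat}$ under the adjunction of Proposition \ref{cograph_Kan_adjunction}.
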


\begin{proof}
Considering the functors
$$\QChu\to^{\dom}\QCat\to^{(-)_{\nat}}\QDis,\quad\QChu\to^{\cod}(\QCat)^{\op}\to^{(-)^{\nat}}\QDis,$$
one has the natural transformation
$$\ka:(\dom(-))_{\nat}\oto(\cod(-))^{\nat},\quad\ka_{\phi}:=\phi\ (\phi\in\ob\QChu).$$
By the adjunction of Proposition \ref{cograph_Kan_adjunction}, $\ka D:(\dom D)_{\nat}\oto(\cod D)^{\nat}$ corresponds to a natural transformation $\tkD:\cod D\to\sP(\dom D)_{\nat}$, and the given cone $\ga$ gives a cocone $\ga^*:\sP(\dom D)_{\nat}\to\De\PX$. Forming the colimit cocone $\de:\cod D\to\De W$ one now obtains a unique $\CQ$-functor $\tphi:W\to\PX$ making
$$\bfig
\square/<-`<--`<-`<-/<1000,500>[\De\PX`\sP(\dom D)_{\nat}`\De W`\cod D;\ga^*`\De\tphi`\tkD`\de]
\efig$$
commute in $\QCat$ or, equivalently, making
$$\bfig
\square<1000,500>[\De X`(\dom D)_{\nat}`\De W`(\cod D)^{\nat};\ga_{\nat}`\De\phi`\ka D`\de^{\nat}]
\place(500,0)[\circ] \place(500,500)[\circ] \place(0,250)[\circ] \place(1000,250)[\circ]
\efig$$
commute in $\QDis$, with $\phi:X\oto W$ corresponding to $\widetilde{\phi}$. In other words, we have a cone $\Ga:\De\phi\to D$ with $\dom\phi=X$, $\dom\Ga=\ga$, namely $\Ga=(\ga,\de)$.

Given a cone $\Theta:\De\psi\to D$ with $\psi:Y\oto Z$ in $\QDis$ and a $\CQ$-functor $f:Y\to X$ with $\ga\cdot\De f=\ep:=\dom\Theta$, the cocone $\vartheta:=\cod\Theta:\cod D\to\De Z$ corresponds to a unique $\CQ$-functor $g:W\to Z$ with $\De g\cdot\de=\vartheta$ by the colimit property. As the diagram
$$\bfig
\square|arra|/<-`<-`<-`<-/<1200,600>[\De\PX`\sP(\dom D)_{\nat}`\De W`\cod D;\ga^*``\tkD`\de]
\place(70,350)[\mbox{\scriptsize$\De\tphi$}]
\morphism(0,600)<-500,-500>[\De\PX`\De\PY;]
\place(-300,400)[\mbox{\scriptsize$\De f^*$}]
\morphism(1200,600)|b|<-1700,-500>[\sP(\dom D)_{\nat}`\De\PY;\ep^*]
\morphism<-500,-500>[\De W`\De Z;]
\place(-300,-200)[\mbox{\scriptsize$\De g$}]
\morphism(1200,0)|b|<-1700,-500>[\cod D`\De Z;\vartheta]
\morphism(-500,-500)|l|<0,600>[\De Z`\De\PY;\De\tpsi]
\efig$$
shows, the colimit property of $W$ also guarantees $f^*\tphi=\tpsi g$ (with $\tpsi$ corresponding to $\psi$) which, by Corollary \ref{Chu_transform_presheaf}, means that $(f,g):\psi\to\phi$ is the only morphism in $\QChu$ with $\dom(f,g)=f$ and $\Ga\cdot\De(f,g)=\Theta$.
\end{proof}

\begin{cor}
$\dom:\QChu\to\QCat$ is small-topological; in particular, $\dom$ is a fibration with a fully faithful right adjoint which embeds $\QCat$ into $\QChu$ as a full reflective subcategory. $\cod:\QChu\to(\QCat)^{\op}$ has the dual properties.
\end{cor}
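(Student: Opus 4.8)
The plan is to read the corollary off as an essentially formal consequence of Theorem \ref{dom_initial_lifting} combined with the cocompleteness of $\QCat$ recorded in Corollary \ref{QCat_complete}. The crucial observation is that for any \emph{small} diagram $D:\CJ\to\QChu$ the diagram $\cod D:\CJ\to(\QCat)^{\op}$ is again small, so that its colimit $W=\colim\cod D$ exists in $\QCat$; thus the single hypothesis of Theorem \ref{dom_initial_lifting} is automatically met. Since a $\dom$-structured cone over $D$ is by definition a pair $(X,\ga)$ with $X\in\QCat$ and $\ga:\De X\to\dom D$ a cone in $\QCat$, Theorem \ref{dom_initial_lifting} hands us a $\dom$-initial lifting $\Ga:\De\phi\to D$ of exactly such data. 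Hence every $\dom$-structured cone over a small diagram admits a $\dom$-initial lifting, which is precisely the statement that $\dom$ is small-topological.

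The further claims about $\dom$ are then instances of the general facts about small-topological functors recalled before Proposition \ref{QCat_topological}. Taking $\CJ$ to be a singleton exhibits $\dom$ as a fibration, and taking $\CJ=\varnothing$ produces the values of a right adjoint $R$ to $\dom$: for $X\in\QCat$ the $\dom$-initial lifting of the empty cone is an object $RX\in\QChu$ with $\dom RX=X$, whose initiality says exactly that $\QChu(\psi,RX)\cong\QCat(\dom\psi,X)$ naturally in $\psi$. Initiality also forces $R$ to be fully faithful, so $R:\QCat\to\QChu$ is a full embedding with left adjoint $\dom$; a full subcategory whose inclusion has a left adjoint is reflective, yielding the displayed embedding of $\QCat$ as a full reflective subcategory of $\QChu$.

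For $\cod:\QChu\to(\QCat)^{\op}$ I would argue by duality rather than repeat the construction. The $\CQ$-Chu construction is self-dual in the sense that sending a $\CQ$-distributor $\phi:X\oto W$ to the $\CQ^{\op}$-distributor $\phi^{\op}:W^{\op}\oto X^{\op}$ between opposite categories, and a Chu transform $(f,g)$ to $(g^{\op},f^{\op})$, should give an isomorphism between $\QChu$ and the analogous category built over $\CQ^{\op}$, under which $\cod$ for $\CQ$ becomes $\dom$ for $\CQ^{\op}$. Since Theorem \ref{dom_initial_lifting} and the first two paragraphs apply verbatim to the small quantaloid $\CQ^{\op}$, this shows $\cod$ to be small-topological and to inherit the dual package (fibration, fully faithful right adjoint, full reflective embedding of $(\QCat)^{\op}$). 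Equivalently, one may dualize the proof of Theorem \ref{dom_initial_lifting} directly, replacing the presheaf adjunction $\sP\dv(-)^{\nat}$ of Proposition \ref{cograph_Kan_adjunction} by its copresheaf counterpart and the colimit $\colim\cod D$ by the limit $\lim\dom D$, which exists because $\QCat$ is complete.

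The only step requiring genuine care is this $\cod$ half: one must verify that the opposite-category assignment really defines an isomorphism of Chu categories interchanging $\dom$ and $\cod$, in particular that the distributor inequalities and the Chu-transform condition \eqref{Chu_transform_def} are preserved under $\phi\mapsto\phi^{\op}$ (or, on the alternative route, that the copresheaf construction dual to $\sP$ carries the adjunction used in Theorem \ref{dom_initial_lifting}). Everything else is formal, driven entirely by the availability of all small colimits, and dually all small limits, in $\QCat$.
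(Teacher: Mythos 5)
Your proposal is correct and follows essentially the same route as the paper: small-topologicity of $\dom$ is read off from Theorem \ref{dom_initial_lifting} together with the cocompleteness of $\QCat$ (Corollary \ref{QCat_complete}), the fibration and fully faithful right adjoint are the standard consequences for small-topological functors recalled before Proposition \ref{QCat_topological}, and the claim for $\cod$ is obtained via the isomorphism $(\QChu)^{\op}\cong\CQ^{\op}\text{-}\Chu$ identifying $\cod^{\op}$ with $\dom$ for $\CQ^{\op}$. The point you flag as needing care (that $\phi\mapsto\phi^{\op}$ really gives such an isomorphism interchanging $\dom$ and $\cod$) is exactly the observation the paper records with its commutative square, so nothing is missing.
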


\begin{proof}
With the existence of small colimits guaranteed by Corollary \ref{QCat_complete}, $\dom$-initial liftings to small $\dom$-structured cones exist by Theorem \ref{dom_initial_lifting}. 
For the assertion on $\cod$, first observe that every $\CQ$-category $X=(X,a)$ gives rise to the $\CQ^{\op}$-category $X^{\op}=(X,a^{\circ})$, where $a^{\circ}(x,y)=a(y,x)$ $(x,y\in X)$. With the commutative diagram
$$\bfig
\square<1200,500>[(\QChu)^{\op}`\CQ^{\op}\text{-}\Chu`\QCat`\CQ^{\op}\text{-}\Cat;(-)^{\op}`\cod^{\op}`\dom`(-)^{\op}]
\efig$$
one sees that, up to functorial isomorphisms, $\cod^{\op}:(\QChu)^{\op}\to\QCat$ coincides with the small-topological functor $\dom:\CQ^{\op}\text{-}\Chu\to\CQ^{\op}\text{-}\Cat$.
\end{proof}

\begin{cor} \label{QChu_complete}
$\QChu$ is complete and cocomplete, all small limits and colimits in $\QChu$ are preserved by both $\dom$ and $\cod$.
\end{cor}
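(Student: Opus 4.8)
The plan is to obtain both assertions as bookkeeping consequences of Theorem~\ref{dom_initial_lifting} together with the completeness and cocompleteness of $\QCat$ (Corollary~\ref{QCat_complete}), handling cocompleteness afterwards by passing to $\CQ^{\op}$. For completeness, let $D:\CJ\to\QChu$ be any small diagram. Since $\QCat$ is cocomplete, the colimit $W=\colim\cod D$ exists in $\QCat$, so the hypothesis of Theorem~\ref{dom_initial_lifting} is met automatically for every small $\CJ$. Since $\QCat$ is also complete, $\dom D$ has a limit cone $\ga:\De X\to\dom D$ in $\QCat$. Feeding this limit cone into Theorem~\ref{dom_initial_lifting} produces a $\dom$-initial lifting $\Ga:\De\phi\to D$, and the final clause of that theorem asserts precisely that $\Ga$ is then a limit cone in $\QChu$. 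Hence every small diagram in $\QChu$ has a limit.

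Next I would read off preservation of small limits. By construction $\dom\Ga=\ga$ is the very limit cone of $\dom D$ we started from, so $\dom$ preserves this limit, and therefore every small limit. For $\cod$, the lifting built in the proof of Theorem~\ref{dom_initial_lifting} satisfies $\cod\Ga=\de$, the colimit cocone witnessing $W=\colim\cod D$; since a colimit cocone of $\cod D$ in $\QCat$ is exactly a limit cone of $\cod D$ regarded as a diagram into $(\QCat)^{\op}$, the functor $\cod$ preserves this limit as well.

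Finally I would deduce cocompleteness and preservation of small colimits by duality. Every $\CQ$-category $(X,a)$ determines the $\CQ^{\op}$-category $(X,a^{\circ})$ with $a^{\circ}(x,y)=a(y,x)$, and, as recorded in the proof of the previous corollary, this yields an isomorphism $(\QChu)^{\op}\cong\CQ^{\op}\text{-}\Chu$ carrying $\cod$ and $\dom$ on $\QChu$ to $\dom$ and $\cod$ on $\CQ^{\op}\text{-}\Chu$. Because $\CQ^{\op}$ is again a small quantaloid, the completeness argument just given applies verbatim to $\CQ^{\op}\text{-}\Chu$; transporting it back across the isomorphism gives the cocompleteness of $\QChu$ together with the preservation of small colimits by both functors.

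I expect no serious obstacle here, since the substance already resides in Theorem~\ref{dom_initial_lifting}; what remains is the careful tracking of variances. The two points deserving attention are confirming that $\colim\cod D$ genuinely exists for every small $D$ (which is exactly what cocompleteness of $\QCat$ supplies, so that the theorem applies without any size restriction on $\CJ$) and verifying that the colimit cocone $\de$ is legitimately read as a limit cone in $(\QCat)^{\op}$ when one claims that $\cod$ preserves limits.
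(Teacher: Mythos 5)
Your argument is correct and follows the paper's own route: completeness comes from feeding a limit cone of $\dom D$ into Theorem~\ref{dom_initial_lifting} (the hypothesis $W=\colim\cod D$ being supplied by cocompleteness of $\QCat$), with $\dom\Ga=\ga$ and $\cod\Ga=\de$ giving preservation, and the rest by the duality $(\QChu)^{\op}\cong\CQ^{\op}\text{-}\Chu$. The only cosmetic difference is that the paper obtains preservation of colimits by $\dom$ from its right adjoint rather than by transporting across the duality, but both are immediate.
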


\begin{proof}
The $\dom$-initial lifting of a $\dom$-structured limit cone in $\QCat$ is a limit cone in $\QChu$, which is trivially preserved. Having a right adjoint, $\dom$ also preserves all colimits.
\end{proof}

\begin{rem} \phantomsection
\begin{itemize}
\item[\rm (1)] Let us describe (small) products in $\QChu$ explicitly: Given a family of $\CQ$-distributors $\phi_i:X_i\oto Y_i$ $(i\in I)$, one first forms the product $X$ of the $\CQ$-categories $X_i=(X_i,a_i)$ as in Remark \ref{QCat_limit}(1) with projections $p_i$ and the coproduct of the $Y_i=(Y_i,b_i)$ as in Remark \ref{QCat_limit}(2) with injections $s_i$ $(i\in I)$. The transposes $\tphi_i$ then determines a $\CQ$-functor $\tphi$ making the left square of
    $$\bfig
    \square/<-`<-`<-`<-/<700,500>[\PX`\PX_i`Y`Y_i;p_i^*`\tphi`\tphi_i`s_i]
    \square(1500,0)<700,500>[X`X_i`Y`Y_i;(p_i)_{\nat}`\phi`\phi_i`(s_i)^{\nat}]
    \place(1500,250)[\circ] \place(2200,250)[\circ] \place(1850,0)[\circ] \place(1850,500)[\circ]
    \efig$$
    commutative, while the right square exhibits $\phi$ as a product of $(\phi_i)_{i\in I}$ in $\QChu$ with projections $(p_i,s_i)$, $(i\in I)$; explicitly,
    $$\phi(x,y)=(\tphi(y))_x=(\tphi_i(y)\circ(p_i)_{\nat})_x=(\tphi_i(y))_{p_i(x)}=\phi_i(x_i,y)$$
    for $x=((x_i)_{i\in I},q)$ in $X$ and $y=s_i(y)$ in $Y_i$, $i\in I$.
\item[\rm (2)] The coproduct of $\phi_i:X_i\oto Y_i$ $(i\in I)$ in $\QChu$ is formed like the product, except that the roles of domain and codomain need to be interchanged. Hence, one forms the coproduct $X$ of $(X_i)_{i\in I}$ and the product $Y$ of $(Y_i)_{i\in I}$ in $\QCat$ and obtains the coproduct $\phi:X\oto Y$ in $\QChu$ as in
    $$\bfig
    \square<700,500>[X_i`X`Y_i`Y;(s_i)_{\nat}`\phi_i`\phi`(\phi_i)^{\nat}]
    \place(0,250)[\circ] \place(700,250)[\circ] \place(350,0)[\circ] \place(350,500)[\circ]
    \efig$$
    so that $\phi(x,y)=\phi_i(x,y_i)$ for $y=((y_i)_{i\in I},q)$ in $Y$ and $x=s_i(x)$ in $X_i$, $i\in I$.
\item The equalizer of $(f,g),(\of,\og):\phi\to\psi$ in $\QChu$ is obtained by forming the equalizer and coequalizer
    $$U\to^i X\two^f_{\of}Y\quad\text{and}\quad W\two^g_{\og}Z\to^p V$$
    in $\QCat$, respectively. With $\tchi:V\to\PU$ obtained from the coequalizer property making
    $$\bfig
    \square/<-`<-`<-`<-/<700,500>[\PU`\PX`V`W;i^*`\tchi`\tphi`p]
    \efig$$
    commutative, Theorem \ref{dom_initial_lifting} guarantees that
    $$\chi\to^{(i,p)}\phi\two^{(f,g)}_{(\of,\og)}\psi$$
    is an equalizer diagram in $\QChu$, where
    $$\chi(x,p(w))=(\tchi(p(w))_x=(i^*(\tphi(w)))_x=(\tphi(w)\circ i_{\nat})_x=\phi(i(x),w)$$
    for all $x\in U$, $w\in W$.
\item Coequalizers in $\QChu$ are formed like equalizers, except that the roles of domain and codomain need to be interchanged.
\end{itemize}
\end{rem}

We will now strengthen Corollary \ref{QChu_complete} and show total completeness and total cocompleteness of $\QChu$ with the help of Proposition \ref{total_condition}. To that end, let us observe that, since the limit and colimit preserving functors $\dom$ and $\cod$ must in particular preserve both monomorphisms and epimorphisms, a monomorphism $(f,g):\phi\to\psi$ in $\QChu$ must be given by a monomorphism $f$ and an epimorphism $g$ in $\QCat$, i.e., by an injective $\CQ$-functor $f$ and a surjective $\CQ$-functor $g$. Consequently, $\QChu$ is wellpowered, and so is its dual $(\QChu)^{\op}\cong\CQ^{\op}\text{-}\Chu$. The main point is therefore for us to prove:

\begin{thm} \label{QChu_generator}
$\QChu$ contains a generating set of objects and, consequently, also a cogenerating set.
\end{thm}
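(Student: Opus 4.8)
The plan is to obtain the generating set by exploiting that a $\CQ$-Chu transform is completely determined by its two component $\CQ$-functors, so that the pair
$$(\dom,\cod):\QChu\to\QCat\times(\QCat)^{\op}$$
is \emph{faithful}. Consequently it suffices to exhibit, for each of the two component functors separately, a set of objects of $\QChu$ out of which enough morphisms emanate to detect differences in that component. The organizing principle I would isolate is elementary: if a functor $U:\CA\to\CX$ has a left adjoint $F$ and $\CX$ has a generating set $\CG$, then $\{FC\mid C\in\CG\}$ separates any pair of parallel $\CA$-morphisms whose $U$-images differ, since a separating morphism $C\to UA$ in $\CX$ transposes across the adjunction to a morphism $FC\to A$ that separates the given pair (by naturality of the adjunction bijection in $A$). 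Applying this to $\dom$ and to $\cod$ and taking the union of the two resulting sets will then yield a generating set for $\QChu$, provided both $\dom$ and $\cod$ admit left adjoints.

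The first main step is therefore to construct a left adjoint of $\dom$. I claim it is given by $X\mapsto\varepsilon_X$, where $\varepsilon_X:X\oto\PX$ is the ``evaluation'' $\CQ$-distributor with $\varepsilon_X(x,\sigma)=\sigma_x$; its transpose is $\widetilde{\varepsilon_X}=\id_{\PX}:\PX\to\PX$. By Corollary \ref{Chu_transform_presheaf}, a morphism $(f,g):\varepsilon_X\to\psi$ in $\QChu$ amounts to a $\CQ$-functor $f:X\to\dom\psi$ together with a $\CQ$-functor $g:\cod\psi\to\PX$ satisfying $\widetilde{\varepsilon_X}\circ g=f^*\circ\tpsi$, that is $g=f^*\circ\tpsi$. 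Thus $g$ is forced by $f$, and conversely every $f$ arises; hence $\QChu(\varepsilon_X,\psi)\cong\QCat(X,\dom\psi)$ naturally in $\psi$, so that $\varepsilon_{(-)}\dashv\dom$.

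The left adjoint of $\cod$ is immediate: send a $\CQ$-category $B$ to the empty $\CQ$-distributor $\varnothing\oto B$. A morphism $(\varnothing\oto B)\to\psi$ consists of the unique $\CQ$-functor $\varnothing\to\dom\psi$ together with an \emph{arbitrary} $\CQ$-functor $\cod\psi\to B$, the compatibility condition (\ref{Chu_transform_def}) being vacuous; hence $\QChu((\varnothing\oto B),\psi)\cong(\QCat)^{\op}(B,\cod\psi)$, exhibiting $B\mapsto(\varnothing\oto B)$ as left adjoint to $\cod:\QChu\to(\QCat)^{\op}$. Using the generating set $\{\{s\}\mid s\in\ob\CQ\}$ of $\QCat$ and the cogenerating set $\{D_t\mid t\in\ob\CQ\}$ of $\QCat$ from Remark \ref{QCat_generator} (the latter being a generating set of $(\QCat)^{\op}$), the principle above delivers the generating set
$$\{\varepsilon_{\{s\}}\mid s\in\ob\CQ\}\cup\{(\varnothing\oto D_t)\mid t\in\ob\CQ\}$$
of $\QChu$. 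The cogenerating set is then obtained formally by applying the theorem to $\CQ^{\op}$ and transporting along the isomorphism $(\QChu)^{\op}\cong\CQ^{\op}\text{-}\Chu$.

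I expect the main obstacle to be precisely the construction of the left adjoint of $\dom$, i.e. checking that Chu transforms out of $\varepsilon_X$ are in natural bijection with $\CQ$-functors out of $X$. The crux is that the codomain component of such a transform is rigidly determined by its domain component, which is exactly what the presheaf reformulation of Corollary \ref{Chu_transform_presheaf} makes transparent through $\widetilde{\varepsilon_X}=\id_{\PX}$; one must also confirm that $\varepsilon_X$ is a genuine $\CQ$-distributor and that $\PX$ is a small $\CQ$-category, both of which are routine for small $\CQ$. By comparison, the left adjoint of $\cod$ and the faithfulness of $(\dom,\cod)$ are entirely straightforward.
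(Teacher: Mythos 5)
Your proof is correct, but it takes a genuinely different route from the paper's. You factor the problem through the jointly faithful pair $(\dom,\cod)$ together with the general principle that a left adjoint carries a generating set to a family of objects detecting every difference visible through its right adjoint; this obliges you to construct a left adjoint to $\dom$, namely $X\mapsto\varepsilon_X=(\sy_X)_{\nat}:X\oto\PX$ with $\widetilde{\varepsilon_X}=\id_{\PX}$ --- a fact the paper never isolates (it only records the fully faithful \emph{right} adjoint of $\dom$ coming from small-topologicity). Your verification of $\QChu(\varepsilon_X,\psi)\cong\QCat(X,\dom\psi)$ via Corollary \ref{Chu_transform_presheaf}, and of the left adjoint $B\mapsto(\varnothing\oto B)$ of $\cod$, are both sound, so the set $\{\varepsilon_{\{s\}}\mid s\in\ob\CQ\}\cup\{\varnothing\oto D_t\mid t\in\ob\CQ\}$ does generate. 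The paper instead exhibits the generators $\eta_s:\varnothing\oto D_s$ and $\lam_t:\{t\}\oto\hC$, where $\hC$ doubles each object of $\coprod_{t}\sP\{t\}$, and argues by a direct three-case analysis: the doubling trick lets the single family $\lam_t$ separate transforms differing in \emph{either} component as soon as the domain is nonempty, which is exactly what makes the alternative one-family generating set of the subsequent Remark possible. Your route is more modular and avoids the doubling entirely (your generators $\varnothing\oto D_t$ handle $g\neq\og$ uniformly, not only when $X=\varnothing$), at the price of establishing the extra adjunction; the paper's is more hands-on and yields the sharper Remark as a by-product. Note finally that your $\varepsilon_{\{t\}}:\{t\}\oto\sP\{t\}$ is essentially the restriction of the paper's $\lam_t$ to one undoubled summand of $\hC$, so the two constructions are close cousins.
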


\begin{proof}
With the notations explained below, we show that
$$\{\eta_s:\varnothing\oto D_s\mid s\in\ob\CQ\}\cup\{\lam_t:\{t\}\oto\hC\mid t\in\ob\CQ\}$$
is generating in $\QChu$. Here $D_s$ belongs to a generating set of $\QCat$ (see Remark \ref{QCat_generator}), and
$$C=\coprod\limits_{t\in\ob\CQ}\sP\{t\}$$
is a coproduct in $\QCat$ (see Remark \ref{QCat_limit}(2)) of the presheaf $\CQ$-categories of the singleton $\CQ$-categories $\{t\}$ (see Proposition \ref{cograph_Kan_adjunction}). From $C$ one obtains $\hC$ by adding an isomorphic copy of each object in $C$, which may be easily explained for a $\CQ$-category $(X,a)$: simply provide the set $\hX:=X\times\{1,2\}$ with the structure
$$|(x,i)|_{\hX}=|x|_{X}\quad\text{and}\quad\ha((x,i),(y,j))=a(x,y)$$
for all $x,y\in X$, $i,j\in\{1,2\}$. Noting that the objects of $\sP\{t\}$ are simply $\CQ$-arrows with domain $t$, we now define $\lam_t:\{t\}\oto\hC$ by
$$\lam_t(u,i)=\begin{cases}
u & \text{if}\ \dom u=t,\\
\bot & \text{else}
\end{cases}$$
for $i\in\{1,2\}$ and every object $t$ and arrow $u$ in $\CQ$. For another element $(v,j)$ in $\hC$, if $\dom v=\dom u=t$ one then has
$$[(u,i),(v,j)]\circ\lam_t(u,i)=(v\lda u)\circ u\leq v=\lam_t(v,j),$$
and in other cases this inequality holds trivially. Hence, $\lam_t$ is indeed a $\CQ$-distributor.

Let us now consider $\CQ$-Chu transforms $(f,g)\neq(\of,\og):\phi\to\psi$ as in
$$\bfig
\square|alra|/@{->}@<3pt>`->`->`@{->}@<3pt>/<800,500>[(X,a)`(Y,b)`(W,d)`(Z,c);f_{\nat}`\phi`\psi`g^{\nat}]
\morphism(0,500)|b|/@{->}@<-3pt>/<800,0>[(X,a)`(Y,b);\of_{\nat}]
\morphism(0,0)|b|/@{->}@<-3pt>/<800,0>[(W,d)`(Z,c);\og^{\nat}]
\place(0,250)[\circ] \place(800,250)[\circ] \place(400,-30)[\circ] \place(400,30)[\circ] \place(400,470)[\circ] \place(400,530)[\circ]
\efig$$

{\bf Case 1}: $X=\varnothing$ is the initial object of $\QCat$ (and $\QDis$). Then $g\neq\og$, and we find $s\in\ob\CQ$ and $h:W\to D_s$ with $hg\neq h\og$ in $\QCat$. Consequently, $(1_{\varnothing},h):\eta_s\to\phi$ satisfies $(f,g)(1_{\varnothing},h)\neq(\of,\og)(1_{\varnothing},h)$.

{\bf Case 2}: $f\neq\of$, so that $f(x_0)\neq\of(x_0)$ for some $x_0\in X$. Then, for $t:=|x_0|$, $e:\{t\}\to X$, $|x_0|\mapsto x_0$, is a $\CQ$-functor with $fe\neq\of e$, and it suffices to show that
$$h:W\to\hC,\quad w\mapsto(\phi(x_0,w),1)$$
is a $\CQ$-functor making $(e,h):\lam_t\to\phi$ a $\CQ$-Chu transform. Indeed,
\begin{align*}
&d(w,w')\leq\phi(x_0,w')\lda\phi(x_0,w)=[h(w),h(w')],\\
&\lam_t(h(w))=\phi(x_0,w)=\phi(e(t),w)
\end{align*}
for all $w,w'\in W$.

{\bf Case 3}: $X\neq\varnothing$ and $g\neq\og$. Then $g(z_0)\neq\og(z_0)$ for some $z_0\in Z$, and with any fixed $x_0\in X$ we may alter the previous definition of $h:W\to\hC$ by
$$h(w):=\begin{cases}
(\phi(x_0,w),2) & \text{if}\ w=\og(z_0),\\
(\phi(x_0,w),1) & \text{else}.
\end{cases}$$
The verification for $h$ to be a $\CQ$-functor and $(e,h):\lam_t\to\phi$ a $\CQ$-Chu transform remain intact, and since $hg\neq h\og$, the proof is complete.
\end{proof}

\begin{rem}
A generating set in $\QChu$ may be alternatively given by
$$\{\lam_{\varnothing}:\varnothing\oto\hC\}\cup\{\lam_t:\{t\}\oto\hC\mid t\in\ob\CQ\},$$
so that in Case 1 one may proceed exactly as in Case 3 only by replacing $\phi(x_0,w)$ with $\top:q\to|w|$ for any fixed $q\in\ob\CQ$.
\end{rem}

With Theorem \ref{QChu_generator} we obtain:

\begin{cor}
$\QChu$ is totally complete and totally cocomplete and, in particular, hypercocomplete and hypercomplete.
\end{cor}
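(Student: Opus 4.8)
The plan is to exhibit an explicit generating set and then obtain the cogenerating set by duality. Recall that a set $\mathcal{G}$ of objects generates $\QChu$ if, for every parallel pair $(f,g)\neq(\of,\og):\phi\to\psi$, some $G\in\mathcal{G}$ admits a morphism $u:G\to\phi$ with $(f,g)\cdot u\neq(\of,\og)\cdot u$. Since a morphism carries two components --- $f=\dom(f,g)$ acting covariantly on domains and $g=\cod(f,g)$ acting contravariantly on codomains --- the functors $\dom$ and $\cod$ must be probed separately, so I would assemble $\mathcal{G}$ from two families: objects whose domain component is pointlike (built on the generators $\{t\}$ of $\QCat$) to detect differences in $f$, and objects whose domain is initial (built on the cogenerator pieces $D_s$ of Remark \ref{QCat_generator}) to detect differences in $g$ when the domain side cannot see them.

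First I would set up the codomain object. A morphism $u=(e,h):G\to\phi$ into $\phi:X\oto W$ consists of $e:\dom G\to X$ together with a $\CQ$-functor $h:W\to\cod G$ satisfying the Chu condition (\ref{Chu_transform_def}); the crux is that $\cod G$ be rich enough that a suitable $h$ always exists. The natural choice is to assemble the codomain from the presheaf categories $\sP\{t\}$, whose objects are exactly the $\CQ$-arrows out of $t$, since by Proposition \ref{cograph_Kan_adjunction} these accommodate the data $w\mapsto\phi(x_0,w)$. Taking $C=\coprod_t\sP\{t\}$ and its doubling $\hC=C\times\{1,2\}$, I would define for each $t$ a distributor $\lam_t:\{t\}\oto\hC$ sending the base point to $u$ on a copy $(u,i)$ with $\dom u=t$ and to $\bot$ otherwise, checking directly (via $(v\lda u)\circ u\le v$) that $\lam_t$ is a $\CQ$-distributor. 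For each $x_0\in X$ of extent $t$, the assignment $w\mapsto(\phi(x_0,w),1)$ is then readily verified to be a $\CQ$-functor $h:W\to\hC$ making $(e,h):\lam_t\to\phi$ (with $e(t)=x_0$) a Chu transform, the two required identities reducing to the distributor inequality for $\phi$ and the definition of $\lam_t$.

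With this in hand the separation splits into cases. If $f\neq\of$, pick $x_0$ with $f(x_0)\neq\of(x_0)$, set $t=|x_0|$, and use the transform just built with $e(t)=x_0$: post-composition already differs on the domain side, as $\dom$ is a functor and $fe\neq\of e$. If the difference is purely on the codomain side but $X\neq\varnothing$, fix any $x_0$ and exploit the doubling of $\hC$: modify $h$ so as to send the single value $\og(z_0)$ into the second copy and everything else into the first, forcing $hg\neq h\og$ irrespective of whether the underlying $\phi$-values coincide. This is the step I expect to be the main obstacle, since $\dom$ alone is blind to a codomain difference; the extra index is precisely what makes that difference visible. Finally, when $X=\varnothing$ the domain component is forced and only $g$ can differ, and here the objects $\eta_s:\varnothing\oto D_s$ do the work: a morphism out of $\eta_s$ is just a $\CQ$-functor $W\to D_s$ (the Chu condition being vacuous over the empty domain), and since the $D_s$ cogenerate $\QCat$ they separate $g(z_0)\neq\og(z_0)$.

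For the cogenerating set I would invoke the self-duality $(\QChu)^{\op}\cong\CQ^{\op}\text{-}\Chu$ recorded before the statement: the construction above produces a generating set in $\CQ^{\op}\text{-}\Chu$ for the small quantaloid $\CQ^{\op}$, and transporting it across the isomorphism yields a cogenerating set in $\QChu$. The genuine difficulty lies entirely in the codomain engineering of the second and third paragraphs --- choosing $\hC$ large enough to admit the functors $h$ yet structured, through the doubling, to register codomain differences that $\dom$ cannot detect; the remaining work consists of the routine checks that $\lam_t$ and the various $h$ respect extents and the relevant distributor inequalities.
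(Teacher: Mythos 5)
Your construction of the generating set is exactly the paper's proof of Theorem \ref{QChu_generator}: the same two families $\{\eta_s:\varnothing\oto D_s\}$ and $\{\lam_t:\{t\}\oto\hC\}$ with $\hC$ the doubled coproduct $\coprod_{t}\sP\{t\}$, the same three-case separation argument (including the use of the second copy of $\hC$ to register a pure codomain difference that $\dom$ alone cannot see), and the same passage to a cogenerating set via $(\QChu)^{\op}\cong\CQ^{\op}\text{-}\Chu$. All the verifications you sketch are correct and agree with the paper's.

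What is missing is the step from ``$\QChu$ has a (co)generating set'' to the statement actually being proved, namely total completeness and total cocompleteness. A generating set by itself does not yield totality; the criterion being used (Proposition \ref{total_condition}) also requires the category to be cocomplete, cowellpowered and with small hom-sets. Cocompleteness is Corollary \ref{QChu_complete}, but cowellpoweredness needs its own (short) argument, which the paper gives just before Theorem \ref{QChu_generator}: since $\dom$ and $\cod$ preserve all small limits and colimits, they preserve epimorphisms, so an epimorphism $(f,g)$ in $\QChu$ has $f$ surjective and $g$ injective, and cowellpoweredness is inherited from $\QCat$ and its dual (dually for wellpoweredness). With these ingredients Proposition \ref{total_condition} gives totality, i.e.\ total cocompleteness; the dual argument, applied to $\CQ^{\op}\text{-}\Chu$ with the cogenerating set, gives total completeness; and hypercompleteness and hypercocompleteness then follow from the recorded fact that total cocompleteness implies hypercompleteness. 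Your proposal should end by assembling these pieces rather than stopping at the cogenerating set.
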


\end{document}